\newtheorem{theorem}{Theorem}[section]
\newtheorem{proposition}[theorem]{Proposition}
\newtheorem{lemma}[theorem]{Lemma}
\newtheorem{corollary}[theorem]{Corollary}
\newenvironment{proof}{\medskip\emph{Proof.}}{\hfill$\Box$\medskip}
\newcommand{\Eff}{\mathsf{Eff}}
\newcommand{\Set}{\mathsf{Set}}
\newcommand{\pow}[1]{P(#1)}
\newcommand{\epow}[1]{\EuScript{P}(#1)}
\newcommand{\cpow}[1]{P_\omega(#1)}
\newcommand{\chains}[1]{\EuScript{C}(#1)}
\newcommand{\orth}[1]{\EuScript{D}(#1)}
\newcommand{\orthogonal}[1]{#1 \perp \nabla\two}
\newcommand{\pr}[1]{\varphi_#1}
\newcommand{\clx}{\mathsf{cl}_{\lnot\lnot}}
\newcommand{\cl}[1]{\clx#1}
\newcommand{\set}[1]{\{#1\}}
\newcommand{\such}{\mid}
\newcommand{\iimplies}{\mathbin{\Rightarrow}}
\newcommand{\Ex}[1]{\mathsf{E}_{#1}}
\newcommand{\tbigcap}{\bigcap\nolimits}
\newcommand{\tbigcup}{\bigcup\nolimits}
\newcommand{\defined}[1]{#1{\downarrow}}
\newcommand{\pair}[1]{\langle #1 \rangle}
\newcommand{\bound}{\mathsf{bound}}
\newcommand{\sem}[1]{[\![#1]\!]}
\newcommand{\NN}{\mathbb{N}}
\newcommand{\nno}{\mathsf{N}}
\newcommand{\two}{\mathsf{2}}
\newcommand{\one}{\mathsf{1}}
\newcommand{\obj}[1]{(|#1|,{=}_#1)}
\newcommand{\all}[3]{\forall\, #1 \,{\in}\, #2\,.\left(#3\right)}
\newcommand{\some}[3]{\exists\, #1 \,{\in}\, #2\,.\left(#3\right)}
\newcommand{\xall}[3]{\forall\, #1 \,{\in}\, #2\,.\,#3}
\newcommand{\xsome}[3]{\exists\, #1 \,{\in}\, #2\,.\,#3}
\newcommand{\xtlam}[3]{\lambda #1 \,{:}\, #2\,.\,#3}
\begin{document}

\title{On the Failure of Fixed-Point Theorems\\
  for Chain-complete Lattices\\
  in the Effective Topos}

\author{Andrej Bauer\\
  Faculty of Mathematics and Physics\\
  University of Ljubljana\\
  Ljubljana, Slovenia\\
  Email: \texttt{Andrej.Bauer@andrej.com}}

\maketitle

\begin{abstract} 
  In the effective topos there exists a chain-complete distributive
  lattice with a monotone and progressive endomap which does not have
  a fixed point. Consequently, the Bourbaki-Witt theorem and Tarski's
  fixed-point theorem for chain-complete lattices do not have
  constructive (topos-valid) proofs.
\end{abstract}

\section{Introduction}
\label{sec:introduction}

Fixed-point theorems state that maps have fixed points under certain
conditions. They are used prominently in denotational semantics, for
example to give meaning to recursive programs. In fact, it is hard to
overestimate their applicability and importance in mathematics in
general.

A constructive proof of a fixed-point theorem makes the theorem twice
as worthy because it yields an algorithm for computing a fixed point.
Indeed, many fixed-point theorems have constructive proofs, of which
we might mention Lawvere's fixed-point
theorem~\cite{lawvere06:_diagon_argum_and_cartes_closed_categ},
Tarski's fixed-point theorem for a monotone map on a complete
lattice~\cite{tarski55:_lattic_theor_fixpoin_theor_and_its_applic},
and Pataraia's generalization of it to directed-complete
posets~\cite{pataraia97:_const_proof_of_tarsk_fixed}. Two that have
defied constructive proofs are Tarski's theorem for chain-complete
posets and the Bourbaki-Witt
theorem~\cite{bourbaki49:_sur_le_theor_de_zorn,witt51:_beweis_zum_satz_von_m}
for progressive maps on chain-complete posets, see
Section~\ref{sec:discussion} for their precise statements.

I show that in the effective topos~\cite{HylandJ:efft} there is a
chain-complete distributive lattice with a monotone and progressive
endomap which does \emph{not} have a fixed point. An immediate
consequence of this is that both Tarski's theorem for chain-complete
posets and the Bourbaki-Witt theorem have no constructive
(topos-valid) proofs.

The outline of the argument is as follows. In the effective topos
$\Eff$ every chain is a quotient of a subobject of the natural
numbers, hence it has at most countably many global points.
Consequently, the (embedding into $\Eff$ of the) poset $\omega_1$ of
set-theoretic countable ordinals is chain-complete in the effective
topos, even though it is only countably complete in the topos of sets.
The successor function on $\omega_1$ is monotone, progressive, and
does not have a fixed point.
We work out the details of the preceding argument carefully in order
not to confuse external and internal notions of chain-completeness and
countability. We use~\cite{oosten08:_realiz} as a reference on the
effective topos. For the uninitiated, we have included a brief
overview of the effective topos in Appendix~\ref{sec:effective-topos}.

\section{Discrete objects in the effective topos}

An object in the effective topos is \emph{discrete}\footnote{The
  terminology is established and somewhat unfortunate, as it falsely
  suggests that a discrete object has decidable equality.} when it is
a quotient of a subobject of the natural numbers object~$\nno$. Such
objects were studied in~\cite{hyland90:_discr_objec_in_effec_topos},
where it is shown that~$X$ is discrete precisely when it is orthogonal
to~$\nabla \two$, by which we mean that the diagonal map $X \to
X^{\nabla\two}$ is an isomorphism. Here $\two = \set{0,1}$ is the
two-element set and $\nabla : \Set \to \Eff$ is the ``constant
objects'' functor, see Appendix~\ref{sec:functor-nabla}. In the
internal language of~$\Eff$ discreteness of~$X$ is expressed by the
statement
\begin{equation}
  \label{eq:discrete-global}
  \xall{f}{X^{\nabla\two}}{\xall{p}{\nabla\two}{f(p) = f(1)}},
\end{equation}
which says that every $f : \nabla\two \to X$ is constant. We are
interested in the object $\orth{X}$ of discrete subobjects of~$X$,
which we define in the internal language as
\begin{equation*}
  \orth{X} = \set{A \in \epow{X} \such \orthogonal{A}},
\end{equation*}
where $\epow{X}$ is the powerobject and $\orthogonal{A}$ is the
statement\footnote{We take care not to assume that a variable $A$
  ranging over a powerobject~$\epow{X}$ is an actual object in the
  topos, which is why~\eqref{eq:discrete-global}
  and~\eqref{eq:discrete-subset} differ slightly.}
\begin{equation}
  \label{eq:discrete-subset}
  \xall{f}{X^{\nabla\two}}{
    (\xall{p}{\nabla\two}{f(p) \in A}) \implies
    (\xall{p}{\nabla\two}{f(p) = f(1)})
  }.
\end{equation}
Let us explicitly compute $\orth{X}$ in case $X = \nabla S$ for a
set~$S$. The powerobject $\epow{\nabla S}$ is the set $\pow{\NN}^S$
with the non-standard equality predicate
\begin{equation*}
  [A =_{\epow{\nabla S}} B] = (A \iimplies B) \land (B \iimplies A).
\end{equation*}
The object $\orth{\nabla S}$ is the set $\pow{\NN}^S$ with
non-standard equality predicate
\begin{equation*}
  [A =_{\orth{\nabla S}} B] =
  (A \iimplies B) \land (B \iimplies A) \land D(A),
\end{equation*}
where $D : \pow{\NN}^S \to \pow{\NN}$ is a strict extensional relation
representing the predicate~\eqref{eq:discrete-subset}. To compute $D$
we recall how universal quantification over a constant object works.

Suppose $T$ is a set, $X$ is an object, and $\phi$ is a formula with
free variables $t$ and $x$ ranging over $\nabla T$ and $X$,
respectively, represented by the strict extensional relation $F : T
\times |X| \to \pow{\NN}$. Then the predicate $\xall{t}{\nabla
  T}{\phi}$ is represented by the strict extensional relation $|X| \to
\pow{\NN}$ defined by
\begin{equation*}
  x \mapsto {\bigcap\nolimits}_{t \in T} F(t, x).
\end{equation*}
When we apply this to the universal quantifiers
in~\eqref{eq:discrete-subset}, and use the fact that ${\nabla
  S}^{\nabla \two}$ is isomorphic to $\nabla{(S^\two)}$, we find after
a short calculation that
\begin{align*}
  D(A) &= \tbigcap_{f \in S^\two} A(f(0)) \cap A(f(1)) \iimplies
  [f(0) =_{\nabla S} f(1)] \\
  &= \tbigcap_{(x,y) \in S^\two} A(x) \cap A(y) \iimplies [x =_{\nabla
    S} y].
\end{align*}
We will need to know precisely when $D(A)$ is non-empty. If $x \neq y$
then $A(x) \cap A(y) \iimplies [x =_{\nabla S} y]$ is inhabited only
if $A(x) \cap A(y) = \emptyset$, because $x \neq y$ implies $[x
=_{\nabla S} y] = \emptyset$. Thus a necessary condition for $D(A)$ to
be non-empty is that $x \neq y$ implies $A(x) \cap A(y) = \emptyset$.
But this condition is also sufficient, since it implies that
\begin{multline*}
  D(A) =
  \tbigcap_{(x,y) \in S^\two}
  A(x) \cap A(y) \iimplies [x =_{\nabla S} y] = \\
  \left(
    \tbigcap_{x = y} A(x) \cap A(y) \iimplies [x =_{\nabla S} y]
  \right)
  \cap
  \left(
    \tbigcap_{x \neq y} A(x) \cap A(y) \iimplies [x =_{\nabla S} y]
  \right)
  = \\
  \left(
    \tbigcap_{x = y} A(x) \iimplies \NN
  \right)
  \cap
  \left(
    \tbigcap_{x \neq y} \emptyset \iimplies \emptyset
  \right) =
  \left(
    \tbigcap_{x = y} A(x) \iimplies \NN
  \right)
\end{multline*}
is non-empty because it contains at least (the G\"odel codes of) the
constant function $n \mapsto 0$.

Let $\clx : \epow{\nabla S} \to \nabla \pow{S}$ be the operator which
maps a subset to its double-negation closure:
\begin{equation*}
  \xymatrix@+2em{
    {\epow{\nabla S}}
    \ar[r]^{\cong}
    &
    {\Omega^{\nabla S}}
    \ar[r]^{{\lnot\lnot}^{\nabla S}}
    &
    {(\nabla \two)^{\nabla S}}
    \ar[r]^{\cong}
    &
    {\nabla \pow{S}}
  }
\end{equation*}
Let $\cpow{S}$ be the set of all countable subsets of a set~$S$.

\begin{proposition}
  \label{prop:clx-factor-cpow}
  For any set~$S$, the restriction of $\clx$ to $\orth{\nabla S}$
  factors through $\nabla \cpow{S}$:
  \begin{equation*}
    \xymatrix@+1em{
      {\orth{\nabla S}}
      \ar[r]^{i}
      \ar@{-->}[d]
      &
      {\epow{\nabla S}}
      \ar[d]^{\clx}
      \\
      {\nabla\cpow{S}}
      \ar[r]^{\nabla j}
      &
      {\nabla\pow{S}}
    }
  \end{equation*}
\end{proposition}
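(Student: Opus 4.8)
The plan is to reduce the assertion to a statement about global elements and then read it off from the computations preceding the proposition. Recall that $\nabla$ is right adjoint to the global-sections functor $\Gamma$, so $\nabla$ preserves monomorphisms — in particular $\nabla j$ is mono, so a factorization, if it exists, is automatically unique — and the bijection $\Eff(\orth{\nabla S}, \nabla(-)) \cong \Set(\Gamma\,\orth{\nabla S}, -)$ is natural in the $\Set$-argument. Under it, $\clx \circ i$ corresponds to a function $g \colon \Gamma\,\orth{\nabla S} \to \pow{S}$, and, since $j \colon \cpow S \into \pow S$ is mono, $\clx \circ i$ factors through $\nabla j$ exactly when $g$ factors through $j$, i.e. exactly when $g$ takes all its values in $\cpow S$. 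Moreover the value of $g$ at a global element $a \colon \one \to \orth{\nabla S}$ is precisely the subset of $S$ named by the global point $(\clx \circ i) \circ a \colon \one \to \nabla\pow S$. So it suffices to prove: for every global element $a$ of $\orth{\nabla S}$, the subset $(\clx \circ i)(a) \subseteq S$ is countable.

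I would assemble this from two observations. First, by the computation of $\orth{\nabla S}$ above, a global element $a$ is represented by a family $A \in \pow\NN^S$ for which $[A =_{\orth{\nabla S}} A]$ is inhabited; as $(A \iimplies A)$ always contains (a code for) the identity, this reduces to $D(A)$ being inhabited, which by the preceding discussion is equivalent to the sets $A(x)$, $x \in S$, being pairwise disjoint. Second, I would unwind $\clx$ on this representative: the inclusion $i$ leaves $A$ unchanged, and pushing $A$ through $\epow{\nabla S} \cong \Omega^{\nabla S}$, applying $\lnot\lnot$ pointwise, and using $(\nabla\two)^{\nabla S} \cong \nabla\pow S$ shows that $\clx$ carries the subobject of $\nabla S$ with truth values $x \mapsto A(x)$ to the one with truth values $x \mapsto \lnot\lnot A(x)$, where $\lnot\lnot A(x)$ equals $\NN$ or $\emptyset$ according to whether $A(x)$ is inhabited. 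Hence, on global elements, $(\clx \circ i)(a) = \set{x \in S \such A(x) \neq \emptyset}$.

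Combining the two: the nonempty members of the family $(A(x))_{x \in S}$ are pairwise disjoint subsets of $\NN$, so $x \mapsto \min A(x)$ is a well-defined injection of $\set{x \in S \such A(x) \neq \emptyset}$ into $\NN$, and therefore this subset of $S$ is countable. This exhibits the corestriction $\Gamma\,\orth{\nabla S} \to \cpow S$ of $g$, which under the adjunction is the required dashed arrow.

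I expect the one delicate step to be the concrete description of $\clx$ on global elements: one has to chase the three maps in the displayed definition of $\clx$ and check that double negation of the realizability truth value $A(x) \in \pow\NN$ behaves as stated. Everything else is routine bookkeeping with the adjunction $\Gamma \dashv \nabla$ together with the one-line remark — already made in analysing $D(A)$ — that a set injecting into $\NN$ is at most countable.
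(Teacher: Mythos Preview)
Your proposal is correct and follows essentially the same route as the paper: reduce via the adjunction $\Gamma \dashv \nabla$ to checking that the transpose $g \colon \Gamma(\orth{\nabla S}) \to \pow{S}$ lands in $\cpow{S}$, compute $g$ on a global point represented by $A$ as $\set{x \in S \such A(x) \neq \emptyset}$, and use the pairwise disjointness of the $A(x)$ coming from $D(A) \neq \emptyset$ to conclude countability. The only cosmetic differences are that the paper phrases the adjunction step via the unit $\eta$ rather than the hom-set bijection, and argues countability by ``for each $n$ at most one $x$ with $n \in A(x)$'' rather than your explicit injection $x \mapsto \min A(x)$.
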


\begin{proof}
  In the diagram above $i$ and $j$ are inclusions $\orth{\nabla S}
  \subseteq \epow{\nabla S}$ and $\cpow{S} \subseteq \pow{S}$,
  respectively.
  Because $\nabla$ is right adjoint to the global points functor
  $\Gamma$, and $\Gamma \circ \nabla$ is naturally isomorphic to the
  identity, there is a unique $c : \Gamma(\orth{\nabla S}) \to
  \pow{S}$ such that $\clx \circ i$ is the composition of $\nabla c$ and
  the unit of the adjunction~$\eta$ at $\orth{\nabla S}$:
  \begin{equation*}
    \xymatrix@+1em{
      {\orth{\nabla S}}
      \ar[r]^{\eta}
      \ar[rd]_{\clx \circ i}
      &
      {\nabla\Gamma(\orth{\nabla S})}
      \ar[d]^{\nabla c}
      \\
      &
      {\nabla\pow{S}}
    }
  \end{equation*}
  It suffices to show that $c$ factors through $j$, since then $\clx
  \circ i = \nabla c \circ \eta$ factors through $\nabla j$.
  
  A global point $[A] : \one \to \orth{\nabla S}$ is represented by
  $A: S \to \pow{\NN}$ such that $D(A) \neq \emptyset$. Because $\clx$
  is composition with $\lnot\lnot$, we get
  \begin{equation*}
    c([A]) = \set{x \in S \such A(x) \neq \emptyset}.
  \end{equation*}
  Earlier we established that $D(A) \neq \emptyset$ implies $A(x) \cap
  A(y) = \emptyset$ whenever $x \neq y$. Therefore, for each $n \in
  \NN$ there is at most one $x \in A$ such that $n \in A(x)$, which
  means that there are at most countably many $x \in S$ for which
  $A(x) \neq \emptyset$. But then $c([A])$ is a countable subset
  of~$S$, which is what we wanted to prove.
\end{proof}

We shall need one more piece of knowledge about discrete objects.
Define the object $B = (\{0,1\}, {=_B})$ to have the equality
predicate
\begin{equation*}
  [x =_B y] =
  \begin{cases}
    \set{0} & \text{if $x = y = 0$,} \\
    \set{1} & \text{if $x = y = 1$,} \\
    \emptyset & \text{otherwise.}
  \end{cases}
\end{equation*}
The object $B$ is isomorphic to $\one + \one$. By the \emph{uniformity
  principle}~\cite[3.2.21]{oosten08:_realiz}, the following statement
is valid in the internal language of $\Eff$: for all $\phi \in
\epow{\nabla\two \times B}$, if
$\xall{p}{\nabla\two}{\xsome{d}{B}{\phi(p,d)}}$ then
$\xsome{d}{B}{\xall{p}{\nabla\two}{\phi(p,d)}}$. We require the
following equivalent form.

\begin{lemma}
  \label{lemma:nablatwo-forall-or}
  The following statement is valid in the internal language of $\Eff$:
  for all $\phi, \psi : \nabla\two \to \Omega$, if
  $\all{p}{\nabla\two}{\phi(p) \lor \psi(p)}$ then
  $\xall{p}{\nabla\two}{\phi(p)}$ or $\xall{p}{\nabla\two}{\psi(p)}$.
\end{lemma}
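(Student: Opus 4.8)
The plan is to derive this from the uniformity principle stated just above, by choosing the right predicate $\phi \in \epow{\nabla\two \times B}$. Recall $B \cong \one + \one$, so we think of its two elements $0,1$ as tags selecting which of the two disjuncts we intend to witness uniformly. Given $\phi, \psi : \nabla\two \to \Omega$ with $\all{p}{\nabla\two}{\phi(p) \lor \psi(p)}$, I would define $\Phi \in \epow{\nabla\two \times B}$ by setting, internally,
\begin{equation*}
  \Phi(p, d) = (d =_B 0 \land \phi(p)) \lor (d =_B 1 \land \psi(p)).
\end{equation*}
The first step is to check the hypothesis of the uniformity principle for this $\Phi$, namely $\xall{p}{\nabla\two}{\xsome{d}{B}{\Phi(p,d)}}$: fixing $p$, the assumption gives $\phi(p) \lor \psi(p)$; in the first case $d = 0$ witnesses $\Phi(p,d)$, in the second case $d = 1$ does, so $\xsome{d}{B}{\Phi(p,d)}$ holds. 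This is a routine internal argument.

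The second step is to apply the uniformity principle to obtain $\xsome{d}{B}{\xall{p}{\nabla\two}{\Phi(p,d)}}$, and then case-split on the witness $d \in B$. If $d =_B 0$, then for every $p$ we have $(0 =_B 0 \land \phi(p)) \lor (0 =_B 1 \land \psi(p))$; since $[0 =_B 1] = \emptyset$, i.e.\ $0 \neq_B 1$, the right disjunct is false, so $\phi(p)$ holds, giving $\xall{p}{\nabla\two}{\phi(p)}$. Symmetrically, if $d =_B 1$ we get $\xall{p}{\nabla\two}{\psi(p)}$. Hence $\xall{p}{\nabla\two}{\phi(p)}$ or $\xall{p}{\nabla\two}{\psi(p)}$, as required. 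Conversely, the implication in the lemma is stronger than (indeed equivalent to) the quoted uniformity instance, so no generality is lost; one may simply run the reasoning inside the internal language of $\Eff$.

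The only mild subtlety — and the step I would be most careful about — is the bookkeeping around the distinction between $\epow{\nabla\two \times B}$ and honest objects, mirroring the footnote earlier in the paper: the uniformity principle as cited quantifies over $\phi \in \epow{\nabla\two \times B}$, so I must make sure that the predicate $\Phi$ built from the given $\phi, \psi : \nabla\two \to \Omega$ genuinely lives in this powerobject (it does, being a first-order formula in $p, d$ with the stated parameters) and that the back-and-forth translation between "$\psi : \nabla\two \to \Omega$" and "subobject of $\nabla\two$" is harmless. Everything else is a short internal-logic computation using only that $B$ has two provably distinct elements, so no realizability-level reasoning is needed beyond invoking the principle as a black box.
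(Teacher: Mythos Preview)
Your proof is correct and follows essentially the same route as the paper's: define the predicate $\Phi(p,d) = (d = 0 \land \phi(p)) \lor (d = 1 \land \psi(p))$, verify the hypothesis of the uniformity principle from the assumed disjunction, apply the principle, and case-split on the uniform witness~$d$ using that $0 \neq_B 1$. The paper's proof is slightly terser (it writes $\two$ for $B$ and omits your careful remarks about the powerobject bookkeeping), but the argument is the same.
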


\begin{proof}
  We argue in the internal language of~$\Eff$. If
  $\all{p}{\nabla\two}{\phi(p) \lor \psi(p)}$ then
  \begin{equation*}
    \xall{p}{\nabla\two}{\some{d}{\two}{
        (d = 0 \land \phi(p)) \lor
        (d = 1 \land \psi(p)})}.
  \end{equation*}
  To see this, take $d = 0$ if $\phi(p)$ holds and $d = 1$ if
  $\psi(p)$ holds. By the uniformity principle
  \begin{equation*}
    \xsome{d}{\two}{\all{p}{\nabla\two}{
        (d = 0 \land \phi(p)) \lor
        (d = 1 \land \psi(p)})}.
  \end{equation*}
  Consider such $d \in \two$. If $d = 0$ then
  $\xall{p}{\nabla\two}{\phi(p)}$, and if $d = 1$ we obtain
  $\xall{p}{\nabla\two}{\psi(p)}$.
\end{proof}

\section{Posets and Chains in the Effective Topos}
\label{sec:chains}

In this section we work entirely in the internal language of the
effective topos. First we recall several standard order-theoretic
notions. A \emph{poset} $(L, {\leq})$ is an object $L$ with a relation
$\leq$ which is reflexive, transitive, and antisymmetric. A
\emph{lattice} $(L, {\leq}, {\land}, {\lor})$ is a poset in which
every pair of elements $x, y \in L$ has a greatest lower bound $x
\land y$, and least upper bound $x \lor y$. Note that a lattice need
not have the smallest and the greatest element. A lattice is
\emph{distributive} if $\land$ and $\lor$ satisfy the distributivity
laws $(x \land y) \lor z = (x \lor z) \land (y \lor z)$ and $(x \lor
y) \land z = (x \land z) \lor (y \land z)$. An endomap $f : L \to L$
on a poset $(L, {\leq})$ is \emph{monotone} when
\begin{equation*}
  \all{x,y}{L}{x \leq y \implies f(x) \leq f(y)},
\end{equation*}
and \emph{progressive} when $\xall{x}{L}{x \leq f(x)}$.

For $x \in L$ and $A \in \epow{L}$ define $\bound(x,A)$ to be
the relation
\begin{equation*}
  \bound(x,A) \iff \all{y}{L}{y \in A \implies y \leq x}.
\end{equation*}
We say that $z \in L$ is the \emph{supremum} of $A \in \epow{L}$ when
\begin{equation*}
  \bound(z,A) \land
  \all{y}{L}{\bound(y,A) \implies y \leq z}.
\end{equation*}

\begin{lemma}
  \label{lemma:sup-notnot}
  Suppose $(L, {\leq})$ is a poset with a $\lnot\lnot$-stable order.
  For all $A \in \epow{L}$ and $x \in L$, if $x$ is the supremum of
  $\cl{A}$ then $x$ is the supremum of~$A$.
\end{lemma}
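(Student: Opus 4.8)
The plan is to show $\bound(x,A) \land \all{y}{L}{\bound(y,A) \implies y \leq x}$ from the hypothesis that $x$ is the supremum of $\cl A$, where I write $\cl A$ for the $\lnot\lnot$-closure of $A$ inside $\epow L$. The key observation is that $A \subseteq \cl A$, so $x$ being an upper bound of $\cl A$ immediately makes it an upper bound of $A$; that gives the first conjunct for free. The real content is the second conjunct: if $\bound(y,A)$, I must derive $y \leq x$, and for this I will show that $y$ is already an upper bound of $\cl A$, whence $y \leq x$ follows because $x$ is the \emph{least} such upper bound.

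So the crux is: if $\bound(y,A)$ then $\bound(y,\cl A)$. Unfolding, I must show that for every $z \in L$ with $z \in \cl A$ we have $z \leq y$. By definition of the double-negation closure, $z \in \cl A$ means $\lnot\lnot(z \in A)$, i.e. $\lnot\lnot(\exists\,\text{witness that }z\in A)$ — more precisely $z \in \cl A \iff \lnot\lnot(z \in A)$ since $\cl{}$ is postcomposition with $\lnot\lnot : \Omega \to \Omega$. From $\bound(y,A)$ we have $z \in A \implies z \leq y$, hence $\lnot\lnot(z \in A) \implies \lnot\lnot(z \leq y)$, and therefore $z \in \cl A \implies \lnot\lnot(z \leq y)$. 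Now I invoke the hypothesis that $\leq$ is $\lnot\lnot$-stable: $\lnot\lnot(z \leq y) \implies z \leq y$. Combining, $z \in \cl A \implies z \leq y$, which is exactly $\bound(y,\cl A)$.

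Putting the pieces together: given $x$ is the supremum of $\cl A$, the first conjunct $\bound(x,A)$ follows from $\bound(x,\cl A)$ and $A \subseteq \cl A$; for the second, any $y$ with $\bound(y,A)$ satisfies $\bound(y,\cl A)$ by the argument above, so $y \leq x$ by minimality of $x$ among upper bounds of $\cl A$. Hence $x$ is the supremum of $A$.

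I expect the only delicate point to be bookkeeping about what ``$z \in \cl A$'' unfolds to in the internal language — namely that membership in the closure is literally the double negation of membership in $A$, so that $\lnot\lnot$-stability of the order can be applied pointwise. Everything else is a routine manipulation of intuitionistic logic (monotonicity of $\lnot\lnot$ under implication and the trivial inclusion $A \subseteq \cl A$), so there is no serious obstacle once that identification is made.
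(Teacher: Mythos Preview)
Your proof is correct and follows essentially the same route as the paper's. The only cosmetic difference is that the paper packages the argument as the single equivalence $\bound(x,\cl{A}) \iff \bound(x,A)$ (so $A$ and $\cl{A}$ share all upper bounds and hence any supremum), whereas you treat the two conjuncts of ``$x$ is the supremum of $A$'' separately; the underlying use of $A \subseteq \cl{A}$, monotonicity of $\lnot\lnot$, and $\lnot\lnot$-stability of $\leq$ is identical.
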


\begin{proof}
  By definition of $\clx$, $y \in \cl{A}$ is equivalent to
  $\lnot\lnot(y \in A)$. If $\leq$ is $\lnot\lnot$-stable then
  \begin{align*}
    \bound(x, \cl{A}) &\iff
    \all{y}{L}{\lnot\lnot(y \in A) \implies y \leq x} \\
    &\iff
    \all{y}{L}{y \in A \implies \lnot\lnot(y \leq x)} \\
    &\iff
    \all{y}{L}{y \in A \implies y \leq x} \\
    & \iff
    \bound(x, A).
  \end{align*}
  Because $\cl{A}$ and $A$ have the same upper bounds, if $x$ is the
  supremum of one of them then it is the supremum of the other as
  well.
\end{proof}

By a \emph{chain} in a poset $(L, {\leq})$ we mean $C \in \epow{L}$
such that
\begin{equation*}
  \all{x,y}{L}{x \in C \land y \in C \implies x \leq y \lor y \leq x}.
\end{equation*}
The \emph{object of chains in~$L$} is defined as
\begin{equation*}
  \chains{L} = \set{C \in \pow{L} \such
    \all{x,y}{L}{x \in C \land y \in C \implies x \leq y
      \lor y \leq x}
  }.
\end{equation*}

\begin{proposition}
  \label{prop:chain-discrete}
  Every chain is discrete, i.e., $\chains{L} \subseteq \orth{L}$.
\end{proposition}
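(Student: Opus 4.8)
The plan is to argue entirely in the internal language of $\Eff$ and to check that an arbitrary chain $C$ satisfies the discreteness predicate~\eqref{eq:discrete-subset} with $A := C$; since that is exactly what membership $C \in \orth{L}$ means, this yields the inclusion $\chains{L} \subseteq \orth{L}$. So I would fix a chain $C$ in $(L,{\leq})$ together with a map $f \in L^{\nabla\two}$ satisfying $\xall{p}{\nabla\two}{f(p) \in C}$, and then derive $\xall{p}{\nabla\two}{f(p) = f(1)}$, i.e.\ that $f$ is constant.

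Since $f(p), f(q) \in C$ for all $p, q \in \nabla\two$ and $C$ is a chain, I have $\all{p,q}{\nabla\two}{f(p) \leq f(q) \lor f(q) \leq f(p)}$. The key step is to apply Lemma~\ref{lemma:nablatwo-forall-or} twice, once for each of the two quantified points. First, for each fixed $q \in \nabla\two$ I apply the lemma to $\phi(p) := (f(p) \leq f(q))$ and $\psi(p) := (f(q) \leq f(p))$, obtaining $\Phi(q) \lor \Psi(q)$ where $\Phi(q) := \xall{p}{\nabla\two}{f(p) \leq f(q)}$ and $\Psi(q) := \xall{p}{\nabla\two}{f(q) \leq f(p)}$; as $q$ was arbitrary this gives $\all{q}{\nabla\two}{\Phi(q) \lor \Psi(q)}$. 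Since $\Phi$ and $\Psi$ are maps $\nabla\two \to \Omega$, a second application of the lemma yields $(\xall{q}{\nabla\two}{\Phi(q)}) \lor (\xall{q}{\nabla\two}{\Psi(q)})$.

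In the first disjunct I get $\all{p,q}{\nabla\two}{f(p) \leq f(q)}$; instantiating this both at $(p,q)$ and at $(q,p)$ and using antisymmetry of $\leq$ gives $\all{p,q}{\nabla\two}{f(p) = f(q)}$, hence in particular $\xall{p}{\nabla\two}{f(p) = f(1)}$. The second disjunct is handled symmetrically. This completes the verification of~\eqref{eq:discrete-subset} and hence of the proposition.

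The point I expect to be the real obstacle is recognising that a single case split is not enough: applying Lemma~\ref{lemma:nablatwo-forall-or} just once, say with the fixed reference point $f(1)$ in place of $f(q)$, only tells us that $\xall{p}{\nabla\two}{f(p) \leq f(1)}$ holds or that $\xall{p}{\nabla\two}{f(1) \leq f(p)}$ holds, and neither alone forces $f$ to be constant. One has to ``uniformise'' over both arguments of the comparison at the same time, which is precisely what the nested double application of the lemma accomplishes.
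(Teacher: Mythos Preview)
Your proof is correct and follows essentially the same approach as the paper's own proof: both argue in the internal language, use the chain condition to get $\all{p,q}{\nabla\two}{f(p)\leq f(q)\lor f(q)\leq f(p)}$, apply Lemma~\ref{lemma:nablatwo-forall-or} twice to pull both universal quantifiers through the disjunction, and conclude by antisymmetry. Your write-up merely spells out the ``double application'' step in more detail than the paper does, and your closing remark about why a single application does not suffice is a useful clarification.
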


\begin{proof}
  Consider any $C \in \chains{L}$ and $f : \nabla\two \to L$ such that
  $\xall{p}{\nabla\two}{f(p) \in C}$. We need to show that $f$ is
  constant. Because~$C$ is a chain we have
  \begin{equation*}
    \all{p,q}{\nabla\two}{f(p) \leq f(q) \lor f(q) \leq f(p)}.
  \end{equation*}
  By a double application of Lemma~\ref{lemma:nablatwo-forall-or} we
  obtain
  \begin{equation*}
    (\xall{p,q}{\nabla\two}{f(p) \leq f(q)}) \lor
    (\xall{p,q}{\nabla\two}{f(q) \leq f(p)}).
  \end{equation*}
  Because $\leq$ is antisymmetric, both disjuncts imply $f(p) = f(q)$
  for all $p, q \in \nabla\two$, as required.
\end{proof}

\section{The poset $\nabla\omega_1$}
\label{sec:nabla-omega1}

Let $(\omega_1, {\preceq})$ be the distributive lattice of countable
ordinals in $\Set$. This is not a chain-complete poset, but it is
complete with respect to countable subsets. Let $\sup :
\cpow{\omega_1} \to \omega_1$ be the supremum operator which maps a
countable subset $A \subseteq \omega_1$ to its supremum.

The object $\nabla\omega_1$, ordered by $\nabla{\preceq}$, is a
distributive lattice in $\Eff$. One way to see this is to observe that
$\nabla$ preserves finite products, therefore it maps models of the
equational theory of distributive lattices to models of the same
theory. Moreover, $\nabla$ also preserves the statement
\begin{equation*}
  \xall{A}{\cpow{S}}{\text{``$\sup(A)$ is the supremum of~$A$''}}
\end{equation*}
because the statement is expressed in the negative fragment of logic
($\land$, $\implies$, $\forall$), which is preserved by~$\nabla$.

\begin{proposition}
  The poset $\nabla\omega_1$ is chain-complete in $\Eff$.
\end{proposition}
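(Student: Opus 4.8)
The plan is to show that every chain $C \in \chains{\nabla\omega_1}$ has a supremum in $\nabla\omega_1$, by transporting the situation to $\Set$ via the functor $\nabla$ and the results already established for discrete objects. The key observation is that by Proposition~\ref{prop:chain-discrete} a chain $C$ lies in $\orth{\nabla\omega_1}$, so we may apply Proposition~\ref{prop:clx-factor-cpow}: the double-negation closure $\cl{C}$ is (the image under $\nabla$ of) a genuine countable subset of $\omega_1$. Since $\omega_1$ is countably complete in $\Set$, that countable subset has a supremum there, and $\nabla$ preserves the negative-fragment statement asserting that $\sup$ computes suprema; so $\nabla(\sup)$ applied to $\cl{C}$ gives a supremum of $\cl{C}$ in $\nabla\omega_1$. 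Finally, the order $\nabla{\preceq}$ on $\nabla\omega_1$ is $\lnot\lnot$-stable (it is of the form $\nabla$ of a set-theoretic relation, hence $\lnot\lnot$-stable), so Lemma~\ref{lemma:sup-notnot} lets us conclude that this element is also the supremum of $C$ itself.

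First I would fix a chain $C \in \chains{\nabla\omega_1}$ and argue in the internal language. By Proposition~\ref{prop:chain-discrete}, $C \in \orth{\nabla\omega_1}$, so Proposition~\ref{prop:clx-factor-cpow} applies: there is a countable subset $A \in \cpow{\omega_1}$ with $\cl{C} = (\nabla j)(A)$ under the factorization, i.e. $\cl{C}$ is the subobject of $\nabla\omega_1$ classified by $\nabla$ of a countable subset. Next I would invoke the fact, noted just before this proposition, that $\nabla$ preserves the statement $\xall{A}{\cpow{\omega_1}}{\text{``$\sup(A)$ is the supremum of~$A$''}}$; instantiating at $A$, the element $(\nabla\sup)(A) \in \nabla\omega_1$ is the supremum of $\cl{C}$ in the internal sense of Section~\ref{sec:chains}. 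Then I would apply Lemma~\ref{lemma:sup-notnot}, whose hypothesis is met because $\nabla{\preceq}$ is $\lnot\lnot$-stable, to conclude that $(\nabla\sup)(A)$ is the supremum of $C$. Since $C$ was arbitrary, $\nabla\omega_1$ is chain-complete.

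The step I expect to be the main obstacle — or at least the one requiring the most care — is making precise the passage "$\cl{C} = (\nabla j)(A)$ for a genuine countable $A$'', because Proposition~\ref{prop:clx-factor-cpow} is stated as a factorization of morphisms rather than as a pointwise statement about an internally-quantified chain $C$. Concretely, one must check that a variable $C$ ranging over $\chains{\nabla\omega_1}$ can be treated, for the purpose of this argument, via its global points, or else reformulate Proposition~\ref{prop:clx-factor-cpow} so that it yields what is needed after pulling back along an arbitrary generalized element $\one \to \chains{\nabla\omega_1}$; since $\one$ is a generator this is harmless, but it is exactly the point where external and internal countability must not be conflated, and the footnote-level care the introduction promises is needed. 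A secondary small point is verifying $\lnot\lnot$-stability of $\nabla{\preceq}$: this follows because $\nabla$ sends the decidable (hence in particular $\lnot\lnot$-stable) relation $\preceq$ on $\omega_1$ to a relation of the form $\lnot\lnot$-separated, as $\nabla$ lands in $\lnot\lnot$-separated objects, so every $\nabla$-object has $\lnot\lnot$-stable equality and every $\nabla$ of a relation is $\lnot\lnot$-stable.
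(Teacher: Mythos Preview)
Your proposal is correct and follows essentially the same route as the paper: compose the inclusion $\chains{\nabla\omega_1}\subseteq\orth{\nabla\omega_1}$ from Proposition~\ref{prop:chain-discrete} with the factorization of $\clx$ through $\nabla\cpow{\omega_1}$ from Proposition~\ref{prop:clx-factor-cpow} and then with $\nabla\sup$, and argue internally via Lemma~\ref{lemma:sup-notnot} and $\lnot\lnot$-stability of $\nabla{\preceq}$. The obstacle you flag---making the external factorization of Proposition~\ref{prop:clx-factor-cpow} available for an internally quantified $C$---is exactly what the paper sidesteps by first building the supremum operator as an honest composite morphism $\chains{\nabla\omega_1}\to\nabla\omega_1$ and only then verifying internally that it computes suprema; with that framing no appeal to global points is needed.
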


\begin{proof}
  We claim that the supremum operator $\chains{\nabla\omega_1} \to
  \nabla\omega_1$ is the composition
  \begin{equation*}
    \xymatrix@+2.5em{
      {\chains{\nabla\omega_1}}
      \ar[r]^{\subseteq}
      &
      {\orth{\nabla\omega_1}}
      \ar[r]^{\clx}
      &
      \nabla(\cpow{\omega_1})
      \ar[r]^{\nabla\sup}
      &
      \nabla\omega_1
    }
  \end{equation*}
  The arrow marked by $\subseteq$ comes from 
  Lemma~\ref{prop:chain-discrete}, while the one marked as $\clx$ is
  the factorization $\orth{\nabla\omega_1} \to \nabla \cpow{\omega_1}$
  from Proposition~\ref{prop:clx-factor-cpow}.

  We argue in the internal language of~$\Eff$. Consider a chain $C \in
  \chains{\nabla\omega_1}$. Then $\cl{C} \in \nabla\cpow{\omega_1}$,
  therefore $(\nabla \sup)(\cl{C})$ is the supremum of $\cl{C}$. But
  since the order $\nabla{\leq}$ on $\nabla\omega_1$ is
  $\lnot\lnot$-stable it is also the supremum of $C$ by
  Lemma~\ref{lemma:sup-notnot}.
\end{proof}

\begin{corollary}
  \label{cor:counter-example}
  In the effective topos, there is a chain-complete distributive
  lattice with a monotone and progressive endomap which does
  \emph{not} have a fixed point.
\end{corollary}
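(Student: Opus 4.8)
The plan is to take the counterexample to be the successor operation on $\nabla\omega_1$. In $\Set$, let $s\colon\omega_1\to\omega_1$ be $s(\alpha)=\alpha+1$. It is monotone and progressive for $\preceq$ (because $\alpha\preceq\beta$ implies $\alpha+1\preceq\beta+1$, and $\alpha\preceq\alpha+1$, for all countable ordinals), and it has no fixed point, since $\all{\alpha}{\omega_1}{\lnot(s(\alpha)=\alpha)}$ holds in $\Set$ because $\alpha\neq\alpha+1$ always.

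First I would observe that $\nabla$, being a finite-product-preserving functor, carries $s$ to an endomap $\nabla s\colon\nabla\omega_1\to\nabla\omega_1$. Next, exactly as in the discussion opening Section~\ref{sec:nabla-omega1}, the sentences expressing that $s$ is monotone, that $s$ is progressive, and that $s$ has no fixed point all lie in the negative fragment of logic---they are built only from atomic formulas, $\land$, $\implies$ (with $\bot$ occurring only as a consequent), and $\forall$---and are therefore preserved by $\nabla$. Hence, in the internal language of $\Eff$, the map $\nabla s$ is monotone and progressive for $\nabla{\preceq}$, and $\all{x}{\nabla\omega_1}{\lnot(\nabla s(x)=x)}$ holds. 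From this last statement it follows constructively that $\lnot\,\some{x}{\nabla\omega_1}{\nabla s(x)=x}$, since a hypothetical fixed point would at once yield a contradiction; so $\nabla s$ has no fixed point in $\Eff$.

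Finally I would assemble the pieces: by the remarks opening Section~\ref{sec:nabla-omega1}, $\nabla\omega_1$ with the order $\nabla{\preceq}$ and the induced meet and join is a distributive lattice in $\Eff$; by the previous proposition it is chain-complete; and by the preceding paragraph $\nabla s$ is a monotone, progressive endomap on it without a fixed point. That is precisely the asserted counterexample. Essentially nothing remains to be done here---the substantive work, chain-completeness of $\nabla\omega_1$, is already behind us---so the one point that deserves care is the polarity bookkeeping: one must confirm that ``$s$ has no fixed point'', read as $\all{\alpha}{\omega_1}{s(\alpha)=\alpha\implies\bot}$, genuinely lies in the $\nabla$-preserved negative fragment, and that its $\lnot\exists$ form follows constructively. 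Everything else is immediate transfer along $\nabla$ together with the earlier results.
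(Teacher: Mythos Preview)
Your proof is correct and follows essentially the same approach as the paper: take $\nabla$ of the successor map on $\omega_1$, observe that monotonicity, progressiveness, and absence of a fixed point are all expressible in the negative fragment and hence preserved by $\nabla$, and combine with the previously established facts that $\nabla\omega_1$ is a chain-complete distributive lattice. Your extra remark unpacking why ``no fixed point'' lies in the negative fragment and why the $\lnot\exists$ form follows is a helpful elaboration but not a departure from the paper's argument.
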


\begin{proof}
  The successor map $\mathsf{succ} : \omega_1 \to \omega_1$ is
  monotone, progressive, and does not have a fixed point. The functor
  $\nabla$ preserves these properties because they are all expressed
  in the negative fragment. Therefore, in the effective topos
  $\nabla\omega_1$ is a chain-complete distributive lattice and
  $\nabla\mathsf{succ}$ is monotone, progressive and does not have a
  fixed point.
\end{proof}

\section{Discussion}
\label{sec:discussion}

An immediate consequence of Corollary~\ref{cor:counter-example} is
that the following theorems \emph{cannot} be proved constructively,
i.e., in higher-order intuitionistic logic:
\begin{enumerate}
\item Tarski's
  Theorem~\cite{tarski55:_lattic_theor_fixpoin_theor_and_its_applic}
  for chain-complete lattices: a monotone map on a chain-complete
  lattice has a fixed point.
\item Bourbaki-Witt
  theorem~\cite{bourbaki49:_sur_le_theor_de_zorn,witt51:_beweis_zum_satz_von_m}:
  a progressive map on a chain-complete poset has a fixed point above
  every point.
\end{enumerate}
The theorems cannot be proved even if we assume Dependent Choice
because it is valid in the effective topos.

Dito Pataraia~\cite{pataraia97:_const_proof_of_tarsk_fixed} proved
constructively Tarski's fixed-point theorem for dcpos. A natural
question is whether perhaps the Bourbaki-Witt theorem can also be
proved constructively for dcpos. The following observation by France
Dacar~\cite{dacar08:_suprem_of_famil_of_closur_operat} shows that this
is not possible because the Bourbaki-Witt theorems for chain-complete
posets and dcpos are constructively equivalent.

\begin{theorem}[France Dacar]
  The following are constructively equivalent:
  \begin{enumerate}
  \item
    \label{item:ccpo}
    Every progressive map on a chain-complete inhabited poset has
    a fixed point.
  \item
    \label{item:dcpo}
    Every progressive map on a directed-complete inhabited poset
    has a fixed point.
  \end{enumerate}
\end{theorem}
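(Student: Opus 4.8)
The plan is to prove both implications directly; \ref{item:ccpo} $\Rightarrow$ \ref{item:dcpo} is easy, while \ref{item:dcpo} $\Rightarrow$ \ref{item:ccpo} carries the weight. Throughout we may assume the poset in question has a least element $\bot$: given an inhabited poset $P$ and a point $a \in P$, the up-set $\set{x \in P \such a \leq x}$ has least element $a$, is again chain-complete (resp.\ directed-complete) when $P$ is, is mapped into itself by any progressive endomap of $P$, and a fixed point of the restricted map is a fixed point of the original. For \ref{item:ccpo} $\Rightarrow$ \ref{item:dcpo} the only point to notice is that every inhabited chain is directed: if $x$ and $y$ lie in a chain then $x \leq y$ or $y \leq x$, so one of them bounds both. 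Hence a directed-complete poset has suprema of all inhabited chains, is therefore chain-complete once it is equipped with a least element, and \ref{item:ccpo} applies to any progressive map on it.

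For \ref{item:dcpo} $\Rightarrow$ \ref{item:ccpo}, let $P$ be chain-complete with least element $\bot$ and let $f \colon P \to P$ be progressive. Let $Q$ be the set of chains $C \subseteq P$ with $\bot \in C$ and $f(C) \subseteq C$, ordered by $C \preceq C'$ iff $C \subseteq C'$ and $C$ is an initial segment of the chain $C'$ (i.e.\ downward closed in $C'$). Then $Q$ is inhabited, since the increasing sequence $\set{f^n(\bot) \such n \in \NN}$ is one of its elements, and it is directed-complete: the union of a $\preceq$-directed family of such chains is again a chain through $\bot$ closed under $f$, each member of the family is an initial segment of the union, and the union is easily checked to be the least upper bound. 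Define $g \colon Q \to Q$ by $g(C) = C \cup \set{f^n(\sup C) \such n \in \NN}$, where $\sup C$ is the supremum in $P$ of the inhabited chain $C$, which exists because $P$ is chain-complete. One checks that $g(C)$ again lies in $Q$ and that $C$ is an initial segment of $g(C)$, since every element of $g(C)$ not already in $C$ dominates all of $C$; hence $g$ is progressive. By \ref{item:dcpo} there is $C^* \in Q$ with $g(C^*) = C^*$. Then $\sup C^* \in C^*$, so $\sup C^*$ is the greatest element of $C^*$, and $f(\sup C^*) \in C^*$ forces $f(\sup C^*) \leq \sup C^*$; together with progressivity this yields $f(\sup C^*) = \sup C^*$, a fixed point of $f$.

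The crux is the design of $Q$. The classical Bourbaki-Witt proof works with the least subset of $P$ that contains $\bot$, is closed under $f$, and contains the supremum of each of its sub-chains, and then shows that this set is totally ordered --- a step with no constructive analogue. Restricting attention to those subsets that already are chains and ordering them by inclusion does not help, because a directed union of such chains need not stay closed under suprema of its sub-chains, so the family acquires no join. Dropping the supremum-closure requirement entirely and ordering by initial segments instead repairs both problems: directed unions are plain set-theoretic unions and remain $f$-closed chains, so $Q$ is genuinely directed-complete, while supremum-closure is reinstated dynamically by $g$, whose fixed points are forced to contain their own suprema. The same ordering is what makes $g$ progressive with no monotonicity of $f$ --- indispensable here, since the hypotheses supply only progressivity. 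All of this takes place in higher-order intuitionistic logic and uses nothing beyond definition by recursion on $\NN$, so the equivalence is constructive.
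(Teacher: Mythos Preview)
Your argument is correct, but the hard direction is more elaborate than it needs to be. The paper takes the set of \emph{all} inhabited chains in~$P$, orders it by plain inclusion, and uses the progressive map $F(A) = A \cup \set{f(\sup A)}$; a directed (by inclusion) family of chains has a chain as its union, so this poset is already a dcpo, and a fixed point~$B$ of~$F$ immediately gives $f(\sup B) \in B$, hence $f(\sup B) = \sup B$. Your version imposes the extra conditions $\bot \in C$ and $f(C) \subseteq C$, which then forces you to add all iterates $f^n(\sup C)$ to keep $g(C)$ in $Q$, and to refine the order to initial segments so that $g$ stays progressive. None of this is wrong, but none of it is needed either: under bare inclusion, $A \subseteq A \cup \set{f(\sup A)}$ is trivially progressive without any monotonicity of~$f$, contrary to what your closing commentary suggests. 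Your discussion correctly diagnoses why the classical Bourbaki--Witt subset fails constructively and why sup-closed chains do not form a dcpo, but it overlooks that simply dropping \emph{all} closure conditions and keeping inclusion already works.
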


\begin{proof}
  For this theorem we require chains to be inhabited.\footnote{So far
    we could work with possibly uninhabited chains because the poset
    of interest $\nabla\omega_1$ has a least element.} The direction
  from chain-complete posets to directed-complete ones is trivial
  because every directed-complete poset is chain-complete.
  To prove the converse, suppose~\eqref{item:dcpo} holds and let $(P,
  {\leq})$ be a chain-complete inhabited poset with a progressive map
  $f : P \to P$. The set $C$ of inhabited chains in~$P$, ordered by
  inclusion, is inhabited and closed under directed unions, therefore
  it is a dcpo. Define the map $F : C \to C$ by $F(A) = A \cup
  {f(\sup(A))}$. This is a progressive map on~$C$, therefore
  by~\eqref{item:dcpo} it has a fixed point~$B$. Now $f(\sup(B)) \in
  B$ and hence $f(\sup{B}) \leq \sup{B}$, which means that $\sup(B)$
  is a fixed point of~$f$.
\end{proof}

In constructive mathematics the tradition is not to despair when a
classical theorem turns out to be unprovable, but rather to find a
constructively acceptable formulation and prove it. What that might be
in the present case remains to be seen.

Finally, let us remark that Giuseppe Rosolini~\cite{RosoliniG:modpti}
showed that in a certain realizability model for the intuitionistic
Zermelo-Fraenkel set theory IZF the trichotomous ordinals are
precisely the discrete ordinals which are at most subcountable. Such
ordinals therefore form a set in the model, rather than a class. From
this it follows that the Bourbaki-Witt theorem fails in the model
because the successor map is progressive and has no fixed point.
However, Tarski's theorem for chain-complete posets is not invalidated
because the successor map is \emph{not} monotone in the model. Both
proofs, Rosolini's and the present one clearly use discrete objects in
a similar way.

After this work was presented at the Mathematical Foundations of
Programming Semantics~25 in Oxford, the question arose whether the
Bourbaki-Witt theorem is valid in sheaf toposes. I have recently been
told by Peter Lumsdaine that this is indeed the case because the
inverse image part of a geometric morphism $\mathcal{E} \to
\mathcal{F}$ transfers the Bourbaki-Witt theorem from~$\mathcal{F}$
to~$\mathcal{E}$. Thus, in order to establish the Bourbaki-Witt
theorem in a sheaf topos $\mathcal{E}$ (or in fact any cocomplete
topos), we consider the geometric morphism $\mathcal{E} \to \Set$
whose direct image is the global global sections functor.

\paragraph{Acknowledgment.} I thank France Dacar for inspiration and
many useful bits of knowledge. I also thank Benno van den Berg and
Giuseppe Rosolini for explaining that~\cite{RosoliniG:modpti} implies
constructive failure of the Bourbaki-Witt theorem. I am indebted to
Gisuppere Rosolini who kindly acted as my surrogate at the MFPS~25
conference, which I was unable to attend.



\appendix

\section{The Effective Topos}
\label{sec:effective-topos}

We rely on~\cite{oosten08:_realiz} as a reference on the effective
topos and give only a quick overview of the basic constructions here.

\subsection{Definition of the effective topos}

Recall that a \emph{non-standard} predicate on a set~$X$ is a map $A :
X \to \pow{\NN}$, where we think of $A(x)$ as the set of realizers
(G\"odel codes of programs) which witness the fact that $x$ has the
property $A$. The non-standard predicates on~$X$ form a Heyting
prealgebra $\pow{\NN}^X$ with the partial order
\begin{equation*}
  A \leq B \iff \xsome{n}{\NN}{\xall{x}{X}{\xall{m}{A(x)}{
        \defined{\pr{n}(m)} \land \pr{n}(m) \in B(x)}}},
\end{equation*}
where $\pr{n}$ is the $n$-th partial recursive function and
$\defined{\pr{n}(m)}$ means that $\pr{n}(m)$ is defined. In words, $A$
entails $B$ if there is a program that translates realizers for $A(x)$
to realizers for $B(x)$, uniformly in $x$. Predicates $A$ and $B$ are
\emph{equivalent} when $A \leq B$ and $A \leq B$. If we quotient
$\pow{\NN}^X$ by the equivalence we obtain an honest Heyting algebra, but
we do not do that.

Let $\pair{{-},{-}}$ be a computable pairing function on the natural
numbers~$\NN$, e.g., $\pair{m,n} = 2^m (2 n + 1)$. The Heyting
prealgebra structure of $\pow{\NN}^X$ is as follows:
\begin{align}
  \label{eq:heyting-preaglebra}
  \top(x) &= \NN \\
  \bot(x) &= \emptyset \notag \\
  (A \land B)(x) &=
  \set{\pair{m,n} \such m \in A(x) \land n \in B(x)} \notag \\
  (A \lor B)(x) &=
  \set{\pair{0,n} \such n \in A(x)} \cup
  \set{\pair{1,n} \such n \in B(x)} \notag \\
  (A \iimplies B)(x) &=
  \set{n \in \NN \such
    \xall{m}{A(x)}{\defined{\pr{n}(m)} \land \pr{n}(m) \in B(x)}
  }. \notag
\end{align}
We say that a non-standard predicate $A$ is \emph{valid} if $\top \leq
A$, in which case we write $\models A$. The condition $\top \leq A$ is
equivalent to requiring that $\bigcap_{x \in X} A(x)$ contains at
least one number. Often a non-standard predicate is given as a map $x
\mapsto \phi(x)$ where $\phi$ is an expression with a free
variable~$x$. In this case we abuse notation and write $\models
\phi(x)$ instead of $\models \xtlam{x}{X}{\phi(x)}$. In other words,
free variables are to be implicitly abstracted over.

An object $X = \obj{X}$ in the effective topos is a set $|X|$ with a
non-standard \emph{equality predicate} ${=_X} : |X| \times |X| \to
\pow{\NN}$, which is required to be symmetric and transitive (where we
write $[x =_X y]$ instead of $x =_X y$ for better readability):
\begin{align*}
  & \models [x =_X y] \iimplies [y =_X x], \tag{symmetric} \\
  & \models [x =_X y] \land [y =_X z] \iimplies [x =_X z].
  \tag{transitive}
\end{align*}
Usually we write $\Ex{X}(x)$ for $[x =_X x]$. Think of $\Ex{X}$ as an
``existence predicate'', and $\Ex{X}(x)$ as the set of realizers which
witness the fact that $x$ exists.

In the effective topos a morphism $F : X \to Y$ is represented by a
non-standard functional relation $F : X \times Y \to \pow{\NN}$. More
precisely, we require that
\begin{align*}
  & \models F(x,y) \iimplies \Ex{X}(x) \land \Ex{Y}(y)
  \tag{strict} \\
  & \models [x' =_X x] \land F(x,y) \land [y =_Y y'] \iimplies F(x', y')
  \tag{extensional} \\
  & \models F(x,y) \land F(x,y') \iimplies [y =_X y'] 
  \tag{single-valued} \\
  & 
  \models \Ex{X}(x) \iimplies \tbigcup_{y \in Y} \Ex{Y}(y) \land F(x, y).
  \tag{total}
\end{align*}
Two such functional relations $F, F'$ represent the same morphism when
$F \leq F'$ and $F' \leq F$ in the Heyting prealgebra $\pow{\NN}^{X
  \times Y}$. Composition of $F : X \to Y$ and $G : Y \to Z$ is the
functional relation $G \circ F$ given by
\begin{equation*}
  (G \circ F)(x, z) = \tbigcup_{y \in Y} {F(x, y) \land G(y, z)}.
\end{equation*}
The identity morphism $I : X \to X$ is represented by the relation
$I(x,y) = [x =_X y]$.

The category $\Eff$ is a topos. Let us give a description of
powerobjects. If $X$ is an object then the \emph{powerobject}
$\epow{X}$ is the set $\pow{\NN}^{|X|}$ with the non-standard equality
predicate
\begin{multline*}
  [A =_{\epow{X}} B] =
  (A \iimplies B) \land (B \iimplies A) \land {} \\
    \left(\tbigcap_{x \in |X|} A(x) \iimplies \Ex{X}(x) \right) \land
    \left(\tbigcap_{x,y \in |X|} A(x) \land [x =_X y] \iimplies A(y)
    \right).
\end{multline*}
The complicated part in the second line says that $A$ is strict and
extensional. If $x$ and $y$ are variables of type~$X$ and~$\epow{X}$,
respectively, then the atomic predicate $x \in y$ is represented by
the strict extensional predicate $E : |X| \times \pow{\NN}^{|X|} \to
\pow{\NN}$ defined by $E(u,A) = \Ex{X}(u) \land \Ex{\epow{X}}(A) \land
A(u)$.

\subsection{Interpretation of first-order logic in $\Eff$}

The effective topos supports an interpretation of intuitionistic
first-order logic, which we outline in this section.
Each subobject of an object $X = \obj{X}$ is represented by a
\emph{strict extensional predicate}, which is a non-standard predicate
$A : |X| \to \pow{\NN}$ that satisfies:
\begin{align*}
  & \models A(x) \iimplies \Ex{X}(x), \tag{strict} \\
  & \models A(x) \land [x =_X x'] \iimplies A(x'). \tag{extensional}
\end{align*}
Such a predicate represents the subobject determined by the mono $I :
Y \to X$ where $|Y| = |X|$, $[x =_Y y] = [x =_X y] \land A(x)$, and
$I(x,y) = [x =_Y y]$. Strict predicates represent the same
subobject precisely when they are equivalent as elements of the
Heyting prealgebra $\pow{\NN}^X$.

The interpretation of first-order logic with equality in $\Eff$ may be
expressed in terms of strict extensional predicates and non-standard
equality predicates. Suppose $\phi$ is a formula with a free
variable~$x$ ranging over an object~$X$.\footnote{In the general case
  $\phi$ may contain free variables $x_1, \ldots, x_n$ ranging over
  objects $X_1, \ldots, X_n$, respectively. Such a $\phi$ is
  interpreted as a subobject of $X_1 \times \cdots \times X_n$. It is
  easy to work out the details once you have seen the case of a single
  variable.} The interpretation of $\phi$ is the subobject of $X$
represented by the non-standard predicate $\sem{\phi} : |X| \to
\pow{\NN}$, defined inductively on the structure of~$\phi$ as follows.
The propositional connectives are interpreted by the Heyting
prealgebra structure of non-standard predicates,
cf.~\eqref{eq:heyting-preaglebra}:
\begin{align*}
  \sem{\top} &= \top \\
  \sem{\bot} &= \bot \\
  \sem{\theta \land \psi} &= \sem{\theta} \land \sem{\psi} \\
  \sem{\theta \lor \psi} &= \sem{\theta} \lor \sem{\psi} \\
  \sem{\theta \iimplies \psi} &= \sem{\theta} \iimplies \sem{\psi}.
\end{align*}
Suppose $\psi$ is a formula with free variables $x$ of type~$X$ and
$y$ of type~$Y$, and let $A = \sem{\psi} : |X| \times |Y| \to
\pow{\NN}$ be a strict extensional predicate which interprets $\psi$.
Then the interpretation of the quantifiers is:
\begin{align}
  \sem{\xsome{x}{X}{\psi}}(y) &=
  \tbigcup_{x \in |X|} \Ex{X}(x) \land A(x, y),
  \label{eq:interpret-quant}
  \\
  \sem{\xall{x}{X}{\psi}}(y) &=
  \tbigcap_{x \in |X|} \Ex{X}(x) \iimplies A(x, y).
  \notag
\end{align}
Suppose $f, g : X \to Y$ are morphisms represented by functional
relations $F, G : |X| \times |Y| \to \pow{\NN}$, respectively. The
atomic formula $f = g$, where $x$ is a variable of type~$X$, is
interpreted as the subobject of $X$ represented by the non-standard
predicate $\sem{f = g} : |X| \to \pow{\NN}$, defined by
\begin{equation*}
  \sem{f = g}(x) = \tbigcup_{y \in |Y|} F(x,y) \land G(x,y).
\end{equation*}
If other atomic predicates appear in a formula, their interpretation
must be given in terms of corresponding strict extensional predicates.

\subsection{The functor $\nabla : \Set \to \Eff$}
\label{sec:functor-nabla}

The topos of sets~$\Set$ is (equivalent to) the topos of sheaves for
the $\lnot\lnot$-topology on~$\Eff$. The direct image part of the
inclusion $\Set \to \Eff$ is the functor $\nabla : \Set \to \Eff$
which maps a set $S$ to the object $\nabla S = (S, {=_{\nabla S}})$
where
\begin{equation*}
  [x =_{\nabla S} y] =
  \begin{cases}
    \NN & \text{if $x = y$,} \\
    \emptyset & \text{if $x \neq y$.}
  \end{cases}
\end{equation*}
A map $f : S \to T$ is mapped to the morphism $\nabla f : \nabla S \to
\nabla T$ represented by the functional relation
\begin{equation*}
  (\nabla f)(x,y) = [f(x) =_{\nabla T} y] \;.
\end{equation*}
The inverse image part is the global sections functor $\Gamma : \Eff
\to \Set$, defined as $\Gamma(X) = \Eff(\one, X)$. Concretely, a
global point $\one \to X$ is represented by an element $x \in |X|$
such that $\Ex{X}(x) \neq \emptyset$. Two such $x, y \in |X|$
represent the same global point when $[x =_X y] \neq \emptyset$.

If $S$ is a set then every element of $\nabla S$ exists uniformly, in
the sense that $\Ex{S}(x) = \NN$. Every map $S \to \pow{\NN}$ is
strict and extensional with respect to $=_{\nabla S}$. These two
observations allow us to simplify calculations involving $\nabla S$.
For example, the powerobject $\epow{\nabla S}$ is the set
$\pow{\NN}^S$ with the equality predicate simplified to $[A
=_{\epow{\nabla S}} B] = (A \iimplies B) \land (B \iimplies A)$.
Similarly, the interpretation~\eqref{eq:interpret-quant} of
existential and universal quantifiers simplifies to
\begin{align*}
  \sem{\xsome{x}{\nabla S}{\psi}}(y) &=
  \tbigcup_{x \in S} A(x, y), \\
  \sem{\xall{x}{\nabla S}{\psi}}(y) &=
  \tbigcap_{x \in S} A(x, y).
\end{align*}

\end{document}